\theoremstyle{plain}
\newtheorem*{thm A}{Theorem~A}
\newtheorem*{thm B}{Theorem~B}
\newtheorem*{thm C}{Theorem~C}
\newtheorem*{main 1}{Theorem~1}
\newtheorem*{main 2}{Theorem~2}
\newtheorem*{main 3}{Theorem~3}
\newtheorem*{coro 1}{Corollary~1}
\newtheorem*{coro 2}{Corollary~2}
\newtheorem*{lem 1}{Lemma~1}
\newtheorem*{lem 2}{Lemma~2}
\newtheorem*{lem 3}{Lemma~3}
\newtheorem*{pro A}{Proposition~A}
\newtheorem*{pro B}{Proposition~B}
\newtheorem*{note}{Note}
\newtheorem{theorem}{Theorem}[section]
\newtheorem{lemma}[theorem]{Lemma}
\newtheorem{remark}[theorem]{Remark}
\def \N{\nabla}
\def \al{\alpha}
\def \be{\beta}
\def \la{\lambda}
\def \si{\sigma}
\def \ka{\kappa}
\def \x{\xi}
\def \xo{{\xi}_1}
\def \xtw{{\xi}_2}
\def \xth{{\xi}_3}
\def \xt{\xi_{2}}
\def \xh{\xi_{3}}
\def \XN{\xi_{\nu}}
\def \p{\phi}
\def \po{\phi_{1}}
\def \PN{\phi_{\nu}}
\def \PX{\phi X}
\def \eo{\eta_1}
\def \et{\eta_{2}}
\def \eh{\eta_{3}}
\def \etw{\eta_{2}}
\def \ENx{{\eta}_{\nu}(\xi)}
\def \e{\eta}
\def \E{\eta}
\def \EN{{\eta}_{\nu}}
\def \ENK{{\eta}_{\nu}({\xi})}
\def \X{X_{0}}
\def \ENK{{\eta}_{\nu}({\xi})}
\def \PNK{{\phi}_{\nu}{\xi}}
\def \PNP{{\phi}_{\nu}{\phi}}
\def \RXP{R_\xi\phi}
\def \RX{R_\xi}
\def \RN{\Bar{R}_N}
\def \RNP{\Bar{R}_N\phi}
\def \RNX{\Bar{R}_N (X)}
\def \SN{\sum_{\nu=1}^3}
\def \GBt{G_2({\mathbb C}^{m+2})}
\def \GBo{G_2({\mathbb C}^{m+1})}
\def \QP{{\mathcal Q}^{\bot}}
\def \Q{\mathcal Q}
\begin{document}

\title[Commuting restricted Jacobi operators and Ricci tensor]{Real hypersurfaces in complex two-plane Grassmannians with commuting restricted \\Jacobi operators}

\vspace{0.2in}

\author[E. Pak, Y.J. Suh \& C. Woo]{Eunmi Pak, Young Jin Suh and Changhwa Woo}
\address{\newline Eunmi Pak, Young Jin Suh and Changhwa Woo
\newline Department of Mathematics,
\newline Kyungpook National University,
\newline Daegu 702-701, Republic of Korea}
\email{empak@knu.ac.kr} \email{yjsuh@knu.ac.kr}
\email{legalgwch@knu.ac.kr}

\footnotetext[1]{{\it 2010 Mathematics Subject Classification}:
Primary 53C40; Secondary 53C15.}
\footnotetext[2]{{\it Key words}: real hypersurfaces, complex
two-plane Grassmannians, Hopf hypersurface, structure Jacobi operator, normal Jacobi operator, Ricci tensor, commuting condition.}

\thanks{* This work was supported by Grant Proj. No. NRF-2011-220-C00002 from National Research Foundation of Korea.
The first author by Grant Proj. No. NRF-2012-R1A2A2A01043023 and the third author is supported by NRF Grant funded by the Korean Government (NRF-2013-Fostering Core Leaders of Future Basic Science Program).}

\begin{abstract}
In this paper, we have considered a new commuting condition, that is, $(R_\xi\phi) S = S (R_\xi\phi)$ \big(resp. $(\Bar{R}_N\phi) S = S (\Bar{R}_N\phi$)\big) between the restricted Jacobi operator~$R_\xi\phi$ (resp. $\Bar{R}_N\phi$), and the Ricci tensor $S$ for real hypersurfaces $M$ in $G_2({\mathbb C}^{m+2})$. In terms of this condition we give a complete classification for Hopf hypersurfaces $M$ in $G_2({\mathbb C}^{m+2})$.
\end{abstract}

\maketitle

\section*{Introduction}
\setcounter{equation}{0}
\renewcommand{\theequation}{0.\arabic{equation}}
\vspace{0.13in}

The complex two-plane Grassmannians $\GBt$ are defined as the set of all complex two-dimensional linear subspaces in ${\mathbb C}^{m+2}$. It is a Hermitian symmetric space of rank~$2$ with compact irreducible type.  Remarkably, it is equipped with both a K\"{a}hler structure $J$ and a quaternionic K\"{a}hler structure ${\mathfrak J}$ (not containing $J$) satisfying $JJ_{\nu}=J_{\nu}J$ $(\nu=1,2,3)$, where $\{ J_{\nu}\}_{\nu=1, 2, 3}$ is an orthonormal basis of $\mathfrak J$. In this paper, we assume $m \geq 3$ (see Berndt and Suh~\cite{BS1} and~\cite{BS2}).
\vskip 5pt
Let $M$ be a real hypersurface in $\GBt$ and $N$ denote a local unit normal vector field to
$M$. By using the K\"{a}hler structure $J$ of $\GBt$, we can define a structure vector field by $\x=-JN$, which is said to be a {\it Reeb vector field}. If $\x$ is invariant under the shape operator $A$, it is said to be {\it Hopf}. In addition, $M$ is said to be a {\it Hopf hypersurface} if every integral curve of $M$ is totally geodesic. By the formulas in \cite[Section~$2$]{LCW}, it can be easily seen that $\x$ is Hopf if and only if $M$ is Hopf. From the quaternionic K\"{a}hler structure $\mathfrak J$ of $\GBt$, there naturally exist {\it almost contact 3-structure} vector fields defined by $\x_{\nu}=-J_{\nu}N$, $\nu=1,2,3$. Next, let us denote by $\QP=\text{Span}\{\,\xo, \xt, \xh\}$ a 3-dimensional distribution in a tangent space $T_{p}M$ at $p \in M$, where $\Q$ stands for the orthogonal complement of $\QP$ in $T_{p}M$.
Thus the tangent space of $M$ at $p\in M$ consists of the direct sum of $\Q$ and $\QP$, that is, $T_{p}M =\Q\oplus \QP$.
\vskip 5pt
For two distributions $[\x]=\text{Span}\{\,\x\}$ and $\QP$, we may consider two natural invariant geometric properties under the shape operator $A$ of~$M$, that is, $A [\xi] \subset [\xi]$ and $A\QP \subset \QP$. By using the result of Alekseevskii~\cite{Al-01}, Berndt and Suh~\cite{BS1} have classified all real hypersurfaces with these invariant properties in $\GBt$ as follows:
\begin{thm A}\label{thm A}
Let $M$ be a real hypersurface in $\GBt$, $m \geq 3$. Then both $[\x]$ and $\QP$ are invariant under the shape operator of $M$ if and only if
\begin{enumerate}[\rm(A)]
\item {$M$ is an open part of a tube around a totally geodesic $\GBo$ in $\GBt$, or} \
\item {$m$ is even, say $m = 2n$, and $M$ is an open part of a tube around a totally geodesic ${\mathbb H}P^n$ in $\GBt$}.
\end{enumerate}
\end{thm A}

\noindent In the case of~$(A)$ in Theorem~$A$, we want to say $M$ is of Type~$(A)$. Similarly, in the case of~$(B)$ in Theorem~$A$, we say $M$ is of Type~$(B)$.
\vskip 5pt
 Until now, by using Theorem~$A$, many geometers have investigated some characterizations of Hopf hypersurfaces in $\GBt$ with geometric quantities
 like shape operator, structure (or normal) Jacobi operator, Ricci tensor, and so on.
%there are various well-known results with respect to the Ricci tensor $S$ on Hopf hypersurfaces in $\GBt$
Commuting Ricci tensor means that the Ricci tensor $S$ and the structure tensor field $\p$ commute each other, that is, $S\p = \p S$.
From such a point of view, Suh~\cite{S02} has given a characterization of real hypersurfaces of Type~$(A)$ with commuting Ricci tensor%:
%\begin{thm B}\label{thm B-2}
%Let $M$ be a connected orientable Hopf hypersurface in $\GBt$ with commuting Ricci tensor, $m\geq 3$. Then $M$ is locally congruent to an open part of a tube around a totally geodesic
%$\GBo$ in $\GBt$.
%\end{thm B}
%\vskip 5pt
%Lee and Suh~\cite{LS} gave a characterization of real hypersurfaces of Type~$(B)$ in Theorem~$A$:
%\begin{thm C}\label{thm C}
%Let $M$ be a Hopf hypersurface in $\GBt$, $m \geq 3$. Then the Reeb vector field $\x$ belongs to the
%distribution $\Q$ if and only if $M$ is locally congruent to an open part of a tube around a totally geodesic ${\mathbb H}P^n$ in $\GBt$, $m=2n$, where the distribution $\Q$ denotes the orthogonal complement of $\QP$ in $T_{x}M$, $x \in M$. In other words, $M$ is locally congruent to of Type~$(B)$.
%\end{thm C}
\vskip 5pt
%\vskip 5pt
On the other hand, a Jacobi field along geodesics of a given Riemannian manifold $(\bar M, \bar g)$ is an important role in the study of differential geometry. It satisfies a well-known differential equation which inspires Jacobi operators. It is defined by $(\bar R_{X}(Y))(p)= (\bar R(Y, X)X)(p)$, where $\bar R$ denotes the curvature tensor of~$\bar M$ and $X$,~$Y$ denote any vector fields on $\bar M$. It is known to be a self-adjoint endomorphism on the tangent space $T_{p}\bar M$, $p \in \bar M$. Clearly, each tangent vector field $X$ to $\bar M$ provides a Jacobi operator with respect to $X$. Thus the Jacobi operator on a real hypersurface $M$ of $\GBt$ with respect to $\x$ (resp. $N$) is said to be a {\it structure Jacobi operator} (resp. {\it normal Jacobi operator}) and will be denoted by $R_{\xi}$ (resp. $\bar R_{N}$).
\vskip 5pt
For a commuting problem concerned with structure Jacobi operator $R_{\xi}$ and structure tensor $\p$ of $M$ in $\GBt$, that is, $R_{\xi} \p = \p R_{\xi}$, Suh and Yang~\cite{SY} gave a characterization of a real hypersurface of Type~$(A)$ in $\GBt$. Also, concerned with commuting problem for the normal Jacobi operator $\RN$, P\'erez, Jeong and Suh \cite{PJS} gave a characterization of a real hypersurface of Type~$(A)$ in $\GBt$.
\par
\vskip 5pt
On the other hand, another commuting problem $(\RXP) A =A (\RXP)$ \big(resp. $(\RNP) A =A (\RNP)$\big) related to the shape operator $A$ and the restricted structure Jacobi operator $\RXP$ \big(resp. the restricted normal Jacobi operator $\RNP$\big), which can be only defined in the orthogonal complement $[\x]^{\perp}$ of the Reeb vector field $[\x]$, was recently classified in \cite{LSW}.
\par
\vskip 5pt
Motivated by these results, let us consider the Ricci tensor $S$ instead of the shape operator $A$ for $M$ in $\GBt$. Then as a generalization, naturally, we consider a new commuting condition for the restricted structure Jacobi operator $\RXP$ and the Ricci tensor $S$ defined in such a way that
\begin{equation}\label{C-1}
(\RXP) S =S (\RXP).
\tag{C-1}
\end{equation}
The geometric meaning of \eqref{C-1} can be explained in such a way that any eigenspace of $R_{\x}$ on the distribution ${\mathfrak h}=\{X \in T_{x}M \mid X\perp \xi\}$, $x \in M$, is invariant by the Ricci tensor $S$ of $M$ in
$\GBt$. Now we want to give a complete classification of Hopf hypersurfaces in $\GBt$ with \eqref{C-1} as follows:
\begin{main 1}
Let $M$ be a Hopf hypersurface in complex two-plane Grassmannians $\GBt$, $m \geq 3$ with $(\RXP) S=S (\RXP) $.
If the smooth function $\al=g(A\x, \x)$ is constant along the direction of $\x$, then $M$ is locally congruent with an open part of a tube of some
radius $r \in (0,\frac{\pi}{2\sqrt{2}})$ around a totally geodesic $\GBo$ in $\GBt$.
\end{main 1}
Next, we want to consider another commuting condition between the restricted normal Jacobi operator $\RNP$ and the Ricci tensor $S$ defined by
\begin{equation}\label{C-2}
(\RNP) S =S (\RNP),
\tag{C-2}
\end{equation}
and give a classification of Hopf hypersurfaces in $\GBt$ with \eqref{C-2} as follows:
\begin{main 2}
Let $M$ be a Hopf hypersurface in complex two-plane Grassmannians $\GBt$, $m
\geq 3$ with $(\RNP) S =S (\RNP)$. If the smooth function $\al=g(A\x, \x)$ is constant along the
direction of $\x$, then $M$ is locally congruent to an open part of a tube of some
radius $r \in (0,\frac{\pi}{2\sqrt{2}})$ around a totally geodesic $\GBo$ in $\GBt$.
\end{main 2}
\vskip 5pt
Actually, according to the geometric meaning of the condition \eqref{C-1}(resp. \eqref{C-2}), we also assert that any eigenspaces of the Ricci tensor~$S$ on $M$ in $\GBt$ are invariant under the restricted structure Jacobi operator $\RXP$ (resp. the restricted normal Jacobi operator $\RNP$). In Sections~\ref{Section 1} and \ref{Section 2}, we give a complete proof of Theorems~$1$ and $2$, respectively. We refer to \cite{Al-01}, \cite{BS1}, \cite{BS2} and \cite{LS} for Riemannian geometric structures of $\GBt$, $m \geq 3$.
\vskip 5pt
%%%%%%%%%%%%%%%%%%%%%%%%%%%%%%%%% 3333333333333 %%%%%%%%%%%%%%%%%%%%%%%%%%%%%%%%%%%%
\section{Proof of Theorem~$1$}\label{Section 1}
\setcounter{equation}{0}
\renewcommand{\theequation}{1.\arabic{equation}}
\vspace{0.13in}

%The Ricci tensor $S$ is defined by the trace of the Riemann curvature tensor.
%It is the most standard way to express curvature of Riemannian manifolds and a tensor field
%that measures the extent to which the metric tensor is not locally isometric to a Euclidean space (see \cite{Besse}).
In this section, by using geometric quantities in \cite{S02} and \cite{SY}, we give a complete proof of Theorem~$1$.
To prove it, we assume that $M$ is a Hopf hypersurface in $\GBt$ with \eqref{C-1}, that is,
\begin{equation}\label{eq: 1.1}
(\RXP) SX = S (\RXP)X.
\end{equation}
From now on, $X$,$Y$ and $Z$ always stand for any tangent vector fields on $M$.

Let us introduce the Ricci tensor~$S$ and structure Jacobi operator~$R_{\x}$, briefly.
The curvature tensor $R(X,Y)Z$ of $M$ in $\GBt$ can be derived from the curvature tensor ${\bar R}(X,Y)Z$
of $\GBt$. Then by contracting and using the geometric structure $JJ_{\nu}=J_{\nu}J$ $(\nu=1,2,3)$ related to the K\"{a}hler structure $J$ and the quaternionic K\"{a}hler structure $J_{\nu}$ $(\nu=1,2,3)$, we can derive the Ricci tensor $S$ given by
$$g(SX,Y)={\sum}_{i=1}^{4m-1}g(R(e_i,X)Y,e_i),$$
where $\{e_1, {\cdots}, e_{4m-1}\}$ denotes a basis of the tangent space $T_xM$ of $M$, $x{\in}M$, in $\GBt$ (see~\cite{S02}).

\noindent From the definition of the Ricci tensor~$S$ and fundamental formulas in \cite[section~$2$]{S02}, we have
\begin{equation} \label{eq: 1.2}
\begin{split}
SX & = \sum_{i=1}^{4m-1} R(X,e_i)e_i \\
   & = (4m+7)X - 3\eta(X)\x + hAX - A^2 X \\
   & \quad + \sum_{\nu =1}^3 \{ - 3 \eta_{\nu}(X)\x_{\nu} +
   \eta_{\nu}(\x)\p_{\nu}\p X-\eta(\p_{\nu}X)\p_{\nu}\x -\eta(X)\eta_{\nu}(\x)\x_{\nu} \},
\end{split}
\end{equation}
where $h$ denotes the trace of $A$, that is, $h=\text{Tr}A$ (see \cite[$(1.4)$]{PS}).
By inserting $Y=Z=\x$ into the curvature tensor $R(X,Y)Z$ and using the condition of being Hopf,
the structure Jacobi operator $R_{\x}$ becomes
\begin{equation} \label{eq: 1.3}
\begin{split}
R_{\x}(X)&=R(X,{\x}){\x}\\
&= X - {\eta}(X){\x} - {\SN}\Big\{{\EN}(X){\XN}-{\eta}(X){\ENK}{\XN}\\
&\ \ +3g({\PN}X,{\x}){\PNK}+{\ENK}{\PNP}X\Big\}+{\alpha}AX-\al^2{\eta}(X){\x}
\end{split}
\end{equation} (see \cite[section~$4$]{JMPS}).

\vskip 5pt
Using these equations \eqref{eq: 1.1}, \eqref{eq: 1.2} and \eqref{eq: 1.3}, we prove that the Reeb vector field $\x$ of $M$ belongs to either $\Q$ or $\QP$.
\begin{lemma}\label{lemma 1.1}
Let $M$ be a Hopf hypersurface in $\GBt$, $m \geq 3$, with \eqref{C-1}. If the principal curvature $\alpha=g(A\x, \x)$ is constant along the direction of $\x$, then $\x$ belongs to either the distribution~$\Q$ or the distribution~$\QP$.
\end{lemma}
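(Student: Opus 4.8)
The plan is to probe the commuting condition $(\RXP)S=S(\RXP)$ on the single most convenient vector, namely $\x$ itself, and to read off from it an algebraic identity that locates $\x$ with respect to the splitting $T_pM=\Q\oplus\QP$. Write $\x=\x_{\Q}+\x_{\QP}$, where $\x_{\QP}=\sum_{\nu=1}^{3}\eta_\nu(\x)\x_\nu\in\QP$ and $\x_{\Q}\in\Q$; the statement to be proved is precisely that $\x_{\Q}$ and $\x_{\QP}$ cannot both be nonzero. Throughout I would use the standard almost contact $3$-structure identities $\phi\x_\nu=\phi_\nu\x$, $\eta(\x_\nu)=\eta_\nu(\x)$, $\phi_\nu\x_{\nu+1}=\x_{\nu+2}$ (cyclically), and $\phi^2X=-X+\eta(X)\x$, all of which follow from $JJ_\nu=J_\nu J$.

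First I would substitute $X=\x$ into \eqref{eq: 1.1}. Since $\phi\x=0$ we have $(\RXP)\x=R_\x(0)=0$, so the right-hand side $S(\RXP)\x$ vanishes and the condition collapses to $(\RXP)(S\x)=0$. Using \eqref{eq: 1.2} together with $A\x=\alpha\x$, $\phi\x=0$ and $\eta(\phi_\nu\x)=0$ one finds $S\x=(4m+4+h\alpha-\alpha^2)\x-4\sum_\nu\eta_\nu(\x)\x_\nu$, and applying $\RXP$ (again killing the $\x$-component) reduces the identity to $\sum_\nu\eta_\nu(\x)(\RXP)\x_\nu=0$. Then I would expand each $(\RXP)\x_\nu=R_\x(\phi\x_\nu)$ by means of \eqref{eq: 1.3}: the quadratic terms carrying a factor $\phi_\mu\x_\nu$ cancel by antisymmetry of $\phi_\mu\x_\nu$ against the symmetric weights $\eta_\mu(\x)\eta_\nu(\x)$, and after collecting the surviving terms the whole sum collapses to
\[
4(1-\rho)W+\alpha AW=0,\qquad W:=\sum_{\nu=1}^{3}\eta_\nu(\x)\,\phi\x_\nu=\phi(\x_{\QP}),\qquad \rho:=\sum_{\nu=1}^{3}\eta_\nu(\x)^2 .
\]

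This single identity already settles the degenerate cases. If $W=0$, then $\phi(\x_{\QP})=0$, so $\x_{\QP}\in\ker\phi=\mathbb{R}\x$; since $\eta(\x_{\QP})=\rho$ this gives $\x_{\QP}=\rho\,\x$, and comparing the (orthogonal) $\Q$- and $\QP$-parts forces $\rho\,\x_{\Q}=0$, hence $\rho=0$ (so $\x\in\Q$) or $\x_{\Q}=0$ (so $\x\in\QP$). If $\alpha=0$ the identity reads $4(1-\rho)W=0$, giving $W=0$ (already done) or $\rho=1$, and $\rho=1$ means $|\x_{\QP}|^2=|\x|^2$, i.e. $\x\in\QP$.

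The remaining case, $\alpha\neq0$ and $W\neq0$, is the genuinely hard one: here the identity says that $W$ is a principal curvature vector, $AW=-\tfrac{4(1-\rho)}{\alpha}W$, and a direct computation from \eqref{eq: 1.2} shows $W$ is simultaneously an eigenvector of the Ricci tensor $S$. This is exactly where the hypothesis that $\alpha$ be constant along $\x$ enters: the Berndt--Suh gradient formula for Hopf hypersurfaces, $\mathrm{grad}\,\alpha=(\x\alpha)\x+4\sum_\nu\eta_\nu(\x)\phi\x_\nu$ (see \cite{BS1}), becomes $\mathrm{grad}\,\alpha=4W$ under $\x\alpha=0$ and identifies $W$ with a multiple of $\mathrm{grad}\,\alpha$. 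I would then choose a frame with $\eta_2(\x)=\eta_3(\x)=0$, so that $W=\eta_1(\x)\phi\x_1$, and bring in a second, independent evaluation of $AW$: either by substituting $X=W$ once more into \eqref{eq: 1.1} (now exploiting that $W$ is a common $A$- and $S$-eigenvector) or by feeding $\x_1$, $\phi\x_1$ into the fundamental Hopf structure equation coming from Codazzi. Matching the two expressions for $AW$ yields an over-determined scalar relation in $\alpha$, $\rho$ and $h$ that, under $\x\alpha=0$, is incompatible with $0<\rho<1$; hence $\rho\in\{0,1\}$ and $\x$ lies in $\Q$ or in $\QP$. I expect the bookkeeping of this final step---correctly tracking the $\QP$-correction terms of the Codazzi equation and of $R_\x\phi$, and pinning down the action of $A$ on $\QP$---to be the main obstacle, whereas the reduction to the displayed identity is routine once the $3$-structure identities are in hand.
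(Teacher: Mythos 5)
Your opening moves coincide with the paper's: you substitute $X=\x$ into \eqref{eq: 1.1}, use $\p\x=0$ to collapse the condition to $(\RXP)S\x=0$, compute $S\x$ from \eqref{eq: 1.2}, and reduce to $R_\x(W)=0$ with $W=\p(\x_{\QP})$. Your resulting identity $4(1-\rho)W+\alpha AW=0$ is correct and is exactly the paper's equation \eqref{eq: 1.11} in disguise: in the adapted frame $\x=\eta(X_0)X_0+\eta(\x_1)\x_1$ one has $W=-\eta(X_0)\p X_0$ and $1-\rho=\eta^2(X_0)$, so your identity reads $A\p X_0=-\frac{4\eta^2(X_0)}{\alpha}\p X_0$. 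Your treatment of the degenerate cases $W=0$ and $\alpha=0$ is also sound (the paper handles $\alpha=0$ instead via the Berndt--Suh gradient formula, which is essentially equivalent).

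The gap is in the one step that actually produces the contradiction. You assert that a second, independent evaluation of $AW$ will yield ``an over-determined scalar relation \dots incompatible with $0<\rho<1$,'' but you never produce that second evaluation, and you present two candidate routes (re-substituting $X=W$ into \eqref{eq: 1.1}, or a Codazzi computation) without committing to either. The first route does not obviously help: since $W$ is already an eigenvector of both $A$ and $S$ (your own observation, and the paper's \eqref{eq: 1.6}), feeding $W$ back into the commuting condition tends to be vacuous. What is actually needed is the Codazzi-derived fact that for a Hopf hypersurface with $\x\alpha=0$ and $\x=\eta(X_0)X_0+\eta(\x_1)\x_1$ one has $AX_0=\alpha X_0$, $A\x_1=\alpha\x_1$ and
\[
A\p X_0=\frac{\alpha^2+4\eta^2(X_0)}{\alpha}\,\p X_0,
\]
which the paper imports from \cite[Section~4]{JMPS} as \eqref{eq: 1.12}. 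Comparing with your $A\p X_0=-\frac{4\eta^2(X_0)}{\alpha}\p X_0$ gives $\{\alpha^2+8\eta^2(X_0)\}\p X_0=0$, hence $\p X_0=0$, a contradiction. So your anticipated incompatibility is real, and your plan closes once this known eigenvalue formula is supplied; but as written the decisive computation is missing rather than merely tedious, and the proof is incomplete without it.
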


\begin{proof}
In order to prove this lemma, we put
\begin{equation}\label{**}
\x = \eta(X_{0})X_{0}+\eta(\x_{1})\x_{1}
\end{equation}
for some unit vectors $X_{0} \in \Q$, $\x_{1} \in \QP$ and $\eta(X_{0})
\eta(\x_{1})\neq 0$.
\vskip 5pt
In the case of $\alpha=0$, by virtue of $Y{\alpha}=({\x}{\alpha}){\eta}(Y)-4{\SN}{\ENK}{\EN}({\p}Y)$ in \cite[Lemma~$1$]{BS1}, we obtain easily that $\x$ belongs to either $\Q$ or $\QP$.\\
\indent Thus, we consider the next case~$\al \neq 0$.
Putting $X=\x$ in \eqref{eq: 1.1} and using the fact $\p\x=0$, it follows that
\begin{equation}\label{eq: 1.5}
(\RXP) S\x = 0.
\end{equation}

From \eqref{eq: 1.2} and \eqref{**}, we have
\begin{align}
& S\p\X=\sigma\p\X,\label{eq: 1.6} \\
& S\X=(4m+7+h\al-\al^2)\X+\eo^2(\x)\X-\e(\X)\X,\label{eq: 1.7}\\
& S\x=(4m+4+h\al-\al^2)\x-4\eo(\x)\xo,\label{eq: 1.8}
\end{align}
where $\sigma:=4m+8+h\ka+\ka^2$.\\
\noindent Multiplying $\p$ to \eqref{eq: 1.8}, we have
\begin{equation}\label{eq: 1.9}
\p S\x=-4\eta(\x_{1})\p\x_{1}.
\end{equation}
\indent From $\p \x =0$, we obtain $\po\x=\e(\X)\po\X$ and $\p\X=-\e(\xo)\po\X$. Because of $\e(\X)\e(\xo)\neq0$ and \eqref{eq: 1.9}, \eqref{eq: 1.5} becomes
\begin{equation}\label{eq: 1.10}
\begin{split}
0=\RX(\p\x_{1})=\RX(\po\X)=\RX(\p \X).
\end{split}
\end{equation}
\indent By substituting $X=\p \X$ into \eqref{eq: 1.3} and using \eqref{eq: 1.10}, we get

\begin{equation}\label{eq: 1.11}
 A\p X_0=-\frac{4\e^2(X_0)}{\al}\p \X.
\end{equation}

Due to \cite[Equation~$(2.10)$]{JMPS}, $A \xo = \alpha \xo$ is derived from $\xi \alpha =0$.
This leads to
\begin{equation}\label{eq: 1.12}
A\p\X=\ka\p\X,
\end{equation}
where $\ka=\frac{\al^2+4\e^2{\X}}{\al}$ (see \cite[section~$4$]{JMPS}).

Combining \eqref{eq: 1.11} and \eqref{eq: 1.12}, we obtain
\begin{equation*}
\{\al^2+8\e^2(\X)\}\p\X=0.
\end{equation*}
\indent This means $\p\X=0$ which gives rise to a contradiction. Thus this lemma is proved.
\end{proof}

Now, we shall divide our consideration into two cases that $\x$
belongs to either $\QP$ or $\Q$,
respectively. Next, we further study the case $\x \in \QP$. We may put $\x=\x_{1}\in \QP$ for our convenience sake.
\begin{lemma}\label{lemma 1.2}
Let $M$ be a Hopf hypersurface in $\GBt$. If the Reeb vector field $\x$ belongs to $\QP$, then the Ricci tensor~$S$ commutes with the shape operator~$A$, that is, $SA=AS$.
\end{lemma}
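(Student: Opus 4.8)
The plan is to substitute $\xi=\xi_1$ into the Ricci tensor~\eqref{eq: 1.2}, collapse the $\nu$-sum into a short closed form, and then verify $SA=AS$ by isolating the single non-commuting piece of $S$ and showing that it commutes with $A$ by means of the Hopf structure equations.

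First I would record the algebraic identities forced by $\xi=\xi_1\in\QP$: namely $\eta_1(\xi)=1$, $\eta_2(\xi)=\eta_3(\xi)=0$, $\phi\xi_1=0$, $\phi\xi_2=-\xi_3$, $\phi\xi_3=\xi_2$, together with $\phi_1\xi=0$, $\phi_2\xi=-\xi_3$, $\phi_3\xi=\xi_2$. These yield $\eta(\phi_1X)=0$, $\eta(\phi_2X)=\eta_3(X)$, $\eta(\phi_3X)=-\eta_2(X)$ and $\phi_1\phi=\phi\phi_1$. Feeding them into~\eqref{eq: 1.2} makes the three summands coalesce, and I expect the Ricci tensor to reduce to
\[
SX=(4m+7)X+hAX-A^2X+\phi_1\phi X-7\eta(X)\xi-2\eta_2(X)\xi_2-2\eta_3(X)\xi_3 .
\]

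Next I would form $SA-AS$. The terms $(4m+7)X$, $hAX$ and $-A^2X$ commute with $A$ on the nose, and since $M$ is Hopf we have $A\xi=\alpha\xi$ and $\eta(AX)=\alpha\eta(X)$, so the $-7\eta(X)\xi$ term cancels in the commutator. Hence $SA=AS$ becomes equivalent to the single identity
\[
\phi_1\phi(AX)-A\phi_1\phi X=2\{\eta_2(AX)\xi_2-\eta_2(X)A\xi_2\}+2\{\eta_3(AX)\xi_3-\eta_3(X)A\xi_3\},
\]
which concentrates the whole problem into how $A$ interacts with $\phi_1\phi$ and with $\xi_2,\xi_3$.

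The decisive step, and the \emph{main obstacle}, is this last identity. On $\QP$ the operator $\phi_1\phi$ acts diagonally, fixing $\xi_2,\xi_3$ and annihilating $\xi$, while on $\Q$ it equals the involution $JJ_1$ (indeed $(JJ_1)^2=J^2J_1^2=I$ there, since $JJ_1=J_1J$). Testing the displayed equation on $\xi_2$, for instance, forces $g(A\xi_2,\xi)=0$ (automatic by the Hopf condition) and, more seriously, requires the $\Q$-component of $A\xi_2$ to lie in the $(-1)$-eigenspace of $JJ_1$; a symmetric statement holds for $A\xi_3$ and for $X\in\Q$. This is not a formal cancellation. I would obtain the needed control by differentiating the relation $\xi=\xi_1$ and comparing $\nabla_X\xi=\phi AX$ with the covariant derivative $\nabla_X\xi_1$ of the almost contact $3$-structure, then feeding the resulting relation among $\phi$, $\phi_1$ and $A$ into the Codazzi equation of the Hopf hypersurface. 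Once these structure equations pin down $A\xi_2$, $A\xi_3$ and the commutator $[\phi_1\phi,A]$ on both $\Q$ and $\QP$, I expect the two sides of the displayed identity to match term by term, yielding $SA=AS$.
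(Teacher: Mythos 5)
Your reduction is sound: the simplified expression for $S$ agrees with the paper's \eqref{eq: 1.16}, and $SA=AS$ does come down to the single identity you display for $[\phi_1\phi,A]$. Moreover, the first ingredient you name --- comparing $\nabla_X\xi=\phi AX$ with $\nabla_X\xi_1=q_3(X)\xi_2-q_2(X)\xi_3+\phi_1AX$ --- is exactly what the paper uses, and it is the \emph{only} ingredient needed; the appeal to the Codazzi equation is a detour, and the final step should not be left as an expectation, since it closes purely algebraically. Concretely: pairing the displayed derivative relation with $\xi_2,\xi_3$ gives $q_3(X)=2\eta_3(AX)$ and $q_2(X)=2\eta_2(AX)$, hence
\[
\phi AX=2\eta_3(AX)\xi_2-2\eta_2(AX)\xi_3+\phi_1AX,
\qquad
A\phi X=2\eta_3(X)A\xi_2-2\eta_2(X)A\xi_3+A\phi_1X,
\]
the second being the adjoint of the first. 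Applying $\phi_1$ to the first yields $\phi_1\phi AX=2\eta_2(AX)\xi_2+2\eta_3(AX)\xi_3-AX+\alpha\eta(X)\xi$, while substituting $\phi X$ into the second and using $\eta_2(\phi X)=\eta_3(X)$, $\eta_3(\phi X)=-\eta_2(X)$ yields $A\phi_1\phi X=-AX+\alpha\eta(X)\xi+2\eta_2(X)A\xi_2+2\eta_3(X)A\xi_3$; subtracting gives precisely your identity. This also dissolves the eigenspace worry you raise: the same relation says the $\Q$-component $W$ of $A\xi_2$ satisfies $\phi W=\phi_1W$, whence $\phi_1\phi W=\phi_1^2W=-W$ automatically. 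The paper's endgame is marginally slicker: it substitutes the formula for $\phi_1\phi AX$ directly into $SAX$ to obtain $SAX=(4m+6)AX-6\alpha\eta(X)\xi+hA^2X-A^3X$, which is manifestly symmetric in the metric, so $ASX$ equals the same expression and $SA=AS$ follows at once --- but that is the same computation packaged differently. In short: same approach as the paper; the only defect is that the decisive verification is asserted as an expectation (with an unnecessary Codazzi step) rather than carried out.
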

\begin{proof}

Differentiating $\x=\xi_1$ along any direction $X \in TM$ and using \cite[section~$2$, $(2.2)$ and $(2.3)$]{LKS01}, it gives us
\begin{equation}\label{eq: 1.13}
\phi AX=\N_{X}\x=\N_{X}\xo=q_{3}(X)\xt-q_{2}(X)\xh+\phi_{1} AX.
\end{equation}
\indent Taking the inner product with $\xt$ and $\xh$ in \eqref{eq: 1.13}, respectively gives $q_{3}(X)=2\eta_{3}(AX)$ and $q_{2}(X)=2\eta_{2}(AX)$.
Then \eqref{eq: 1.13} can be revised:
\begin{equation}\label{eq: 1.14}
\p AX=2\eta_{3}(AX)\xt-2\eta_{2}(AX)\xh+\po AX.
\end{equation}
%(see \cite[Lemma~$3.2$]{LKS01}).\\
\indent From this, by applying the inner product with any tangent vector $Y$, we have
\begin{equation*}
g(\p AX,Y)=2\eh(AX)g(\xt,Y)-2\et(AX)g(\xh,Y)+g(\po AX,Y).
\end{equation*}
\indent Then, by using the symmetric (resp. skew-symmetric) property of the shape operator $A$ (resp. the structure tensor field $\p$), we have
\begin{equation*}
-g(X,A\p Y)=2g(X,A\xh)g(\xt,Y)-2g(X,A\xt)g(\xh,Y)-g(Y,A\po X)
\end{equation*}
for any tangent vector fields $X$ and $Y$ on $M$.
Then it can be rewritten as below:
\begin{equation}\label{eq: 1.15}
A\p X=2\eh(X)A\xt-2\et(X)A\xh+A\po X.
\end{equation}

\begin{note}
{\rm Hereafter, the process used from \eqref{eq: 1.14} to \eqref{eq: 1.15} will be expressed as ``{\it taking a symmetric part of \eqref{eq: 1.14}}''.}
\end{note}

Bearing in mind that $\x=\xo\in \QP$, \eqref{eq: 1.2} is simplified:
\begin{equation}\label{eq: 1.16}
\begin{split}
SX&=(4m+7)X-7\e(X)\x-2\etw(X)\xtw\\
  &\quad-2\eh(X)\xth+\po\p X+h AX-A^2X.
\end{split}
\end{equation}

Multiplying $\po$ to \eqref{eq: 1.16} and using basic formulas in \cite[Section~$\rm 2$]{LCW},
we have
\begin{equation}\label{eq: 1.17}
\begin{split}
\po\p AX=2\eta_{3}(AX)\x_{3}+2\eta_{2}(AX)\x_{2}-AX+\al\e(X)\x.
\end{split}
\end{equation}

By replacing $X$ as $AX$ into \eqref{eq: 1.16} and using \eqref{eq: 1.17}, we obtain
\begin{equation}\label{eq: 1.18}
SAX=(4m+6)AX-6\al\e(X)\x+h A^2X-A^3X
\end{equation}
and taking a symmetric part of \eqref{eq: 1.18} again, we get
\begin{equation}\label{eq: 1.19}
ASX=(4m+6)AX-6\al\e(X)\x+h A^2X-A^3X.
\end{equation}
\indent Comparing \eqref{eq: 1.18} and \eqref{eq: 1.19}, we conclude that
\begin{equation*}
SAX=ASX
\end{equation*}
for any tangent $X$.
\end{proof}

By the way, we have equations \eqref{eq: 1.13} and \eqref{eq: 1.15} for the Ricci tensor likewise related to the shape operator.
We may consider similar ones about the Ricci tensor as below:

\begin{lemma}\label{lemma 1.3}
Let $M$ be a Hopf hypersurface in $\GBt$. If the Reeb vector field $\x$ belongs to $\QP$, we have the following formulas
\begin{enumerate}[\rm (i)]
\item {$\p SX=2\eta_{3}(SX)\x_{2}-2\eta_{2}(SX)\x_{3}+\p_{1} SX+\text{Rem}(X)$}\label{eq: i} and
\item {$S\p X=2\eh(X)S\xt-2\etw(X)S\xh+S\po X+\text{Rem}(X)$}, \label{eq: ii}
\end{enumerate}
 where the remainder term $\text{Rem}(X)$ is denoted by $\text{Rem}(X)=4(m+2)\{2\etw(X)\xth-2\eh(X)\xtw+\p X-\po X\}$.
\end{lemma}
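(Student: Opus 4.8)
The plan is to obtain both formulas purely algebraically from the simplified Ricci expression \eqref{eq: 1.16}, the shape-operator identity \eqref{eq: 1.14}, and the standard bracket relations that hold once we put $\xi=\xi_1\in\QP$ as in Lemma~\ref{lemma 1.2}; in particular $\phi\xi_2=-\xi_3$, $\phi\xi_3=\xi_2$, $\phi_1\xi=0$, $\phi_1\xi_2=\xi_3$, $\phi_1\xi_3=-\xi_2$, together with $\phi\phi_1=\phi_1\phi$ (valid because $JJ_1=J_1J$ and $\eta_1=\eta$ when $\xi=\xi_1$). I would prove part~(i) first and then deduce part~(ii) from it by taking a symmetric part, exactly as \eqref{eq: 1.15} was obtained from \eqref{eq: 1.14}.

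To prove part~(i), I would apply $\phi$ to \eqref{eq: 1.16}. Using $\phi\xi=0$, $\phi\xi_2=-\xi_3$, $\phi\xi_3=\xi_2$ and $\phi\phi_1\phi X=\phi_1\phi\phi X=-\phi_1 X$ (since $\phi\phi X=-X+\eta(X)\xi$ and $\phi_1\xi=0$), every term simplifies algebraically except the shape-operator part $h\phi AX-\phi A^2X$. This is where \eqref{eq: 1.14} is indispensable: applying \eqref{eq: 1.14} to $X$ and then to $AX$ converts $h\phi AX-\phi A^2X$ into $\phi_1(hAX-A^2X)$ plus the $\QP$-correction $2\eta_3(hAX-A^2X)\xi_2-2\eta_2(hAX-A^2X)\xi_3$. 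Next I would apply $\phi_1$ to \eqref{eq: 1.16} in the same manner (now using $\phi_1\xi_2=\xi_3$, $\phi_1\xi_3=-\xi_2$, $\phi_1^2\phi X=-\phi X$), which isolates $\phi_1(hAX-A^2X)=\phi_1 SX-(4m+7)\phi_1 X+2\eta_2(X)\xi_3-2\eta_3(X)\xi_2+\phi X$. Substituting this back removes the shape operator completely and leaves $\phi SX$ expressed through $\phi_1 SX$, $\phi X-\phi_1 X$, and the scalar coefficients $\eta_2(hAX-A^2X)$ and $\eta_3(hAX-A^2X)$.

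The last bookkeeping step is to absorb those coefficients into $\eta_\nu(SX)$. Taking the inner product of \eqref{eq: 1.16} with $\xi_2$ and $\xi_3$ and using $\eta_2(\phi_1\phi X)=\eta_2(X)$, $\eta_3(\phi_1\phi X)=\eta_3(X)$ yields $\eta_\nu(SX)=(4m+6)\eta_\nu(X)+\eta_\nu(hAX-A^2X)$ for $\nu=2,3$, hence $\eta_\nu(hAX-A^2X)=\eta_\nu(SX)-(4m+6)\eta_\nu(X)$. Inserting this and collecting terms, the constant $4m+8=4(m+2)$ factors out of the leftover $\phi X-\phi_1 X$ and $\eta_\nu(X)\xi_\mu$ pieces, producing precisely $\text{Rem}(X)$ and thus part~(i). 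For part~(ii) I would then take a symmetric part of part~(i) in the sense of the Note following \eqref{eq: 1.15}: pairing the identity with an arbitrary $Y$, using that $S$ is self-adjoint while $\phi$ and $\phi_1$ are skew-symmetric, and rewriting $\eta_3(SX)=g(X,S\xi_3)$ and $\eta_2(SX)=g(X,S\xi_2)$, turns part~(i) into part~(ii) with the very same remainder (the $S\xi_\nu$-indices swapping exactly as in the statement). I do not anticipate a genuine obstacle, only disciplined sign- and index-tracking; the one load-bearing fact is that $\xi=\xi_1\in\QP$ simultaneously legitimizes the use of \eqref{eq: 1.14} on $AX$ and $A^2X$ and forces $\phi\phi_1=\phi_1\phi$, so that the corrections assemble into $\text{Rem}(X)$ with the precise constant $4(m+2)$.
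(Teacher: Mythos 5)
Your proposal is correct and follows essentially the same route as the paper: both arguments reduce everything to the simplified Ricci expression \eqref{eq: 1.16} together with \eqref{eq: 1.14}--\eqref{eq: 1.15} and the standard relations $\phi\xi_2=-\xi_3$, $\phi\xi_3=\xi_2$, $\phi_1\phi=\phi\phi_1$ valid for $\xi=\xi_1\in\QP$, and both obtain (ii) from (i) by taking a symmetric part. The only (cosmetic) difference is that the paper expands the left- and right-hand sides of (i) separately and checks they agree, whereas you eliminate $\phi_1(hAX-A^2X)$ to derive the right-hand side from the left; the bookkeeping, including the identity $\eta_\nu(hAX-A^2X)=\eta_\nu(SX)-(4m+6)\eta_\nu(X)$ and the emergence of the constant $4(m+2)=4m+8$, checks out.
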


\begin{proof}
Multiplying $\p$ to \eqref{eq: 1.16}, we get the equivalent equation of the Left side of \eqref{eq: i} as follows:

\begin{equation}\label{eq: 1.20}
\begin{split}
\p SX=(4m+7)\p X-\po X+2\etw(X)\xth-2\eh(X)\xtw+h\p AX-\p A^2X.
\end{split}
\end{equation}

%\begin{equation}\label{eq: 1.20}
%\begin{split}
%\p SX&=(4m+7)\p X-\po X+2\etw(X)\xth-2\eh(X)\xtw\\
%     &\quad+h\p AX-\p A^2X.
%\end{split}
%\end{equation}

Using \eqref{eq: 1.14}, and \eqref{eq: 1.15}, the right side of \eqref{eq: i} is can be replaced by
\begin{equation}\label{eq: 1.21}
\begin{split}
&\quad 2\e_{3}(SX)\x_{2}-2\e_{2}(SX)\x_{3}+\p_{1} SX+Rem(X)\\
&=2\eh\big((4m+7)X-2\eh(X)\xth+\po\p X+h AX-A^2X\big)\xtw\\
&\quad-2\etw\big((4m+7)X-2\etw(X)\xtw+\po\p X+h AX-A^2X\big)\xth\\
&\quad+(4m+7)\po X-2\etw(X)\xtw+2\eh(X)\xth-\p X+h\po AX-\po A^2X\\
&\quad+Rem(X)\\
&=(4m+7)\p X-\po X+2\etw(X)\xth-2\eh(X)\xtw+h\p AX-\p A^2X.
\end{split}
\end{equation}

Combining \eqref{eq: 1.20} and \eqref{eq: 1.21}, we get the equation \eqref{eq: i}. In addition, \eqref{eq: ii} can be obtained by taking a symmetric part of \eqref{eq: i}.
\end{proof}

By virtue of Lemmas~\ref{lemma 1.2} and \ref{lemma 1.3}, we assert the following:

\begin{lemma}\label{lemma 1.4}
Let $M$ be a Hopf hypersurface in
$\GBt$ with \eqref{C-1}. If $\x \in \QP$, we have $A(\p S-S\p)=(\p S-S\p)A$.
\end{lemma}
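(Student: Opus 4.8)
The plan is to show that the operator $F:=\phi S-S\phi$ commutes with the shape operator $A$. First I would reduce $F$ to a bracket built only from $A$ and $\phi$. Subtracting formula (ii) from formula (i) of Lemma~\ref{lemma 1.3}, the common remainder $\text{Rem}(X)$ cancels and one obtains $\phi S-S\phi$ written through $\phi_1 S$, $S\phi_1$ and the vectors $\xi_2,\xi_3$; more transparently, the same conclusion comes from commuting $\phi$ directly through the Ricci identity \eqref{eq: 1.16}. Since $\xi=\xi_1\in\QP$ one has $\phi\phi_1=\phi_1\phi$ and $\phi^2=-\,\mathrm{id}+\eta\otimes\xi$, so the $\phi_1\phi$ term and all the $\xi_\nu$-projection terms of $S$ drop out of the commutator $[\phi,S]$, leaving only the $hA-A^2$ part:
\[
\phi S-S\phi=h(\phi A-A\phi)-(\phi A^2-A^2\phi).
\]
Thus $F$ is governed by $A$, $\phi$ and $h=\mathrm{Tr}\,A$ alone, and a direct expansion gives the bracket
\[
A F-F A=h\big(A(\phi A-A\phi)-(\phi A-A\phi)A\big)-\big(A(\phi A^2-A^2\phi)-(\phi A^2-A^2\phi)A\big),
\]
which is not forced to vanish by the Hopf condition by itself.

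To kill this bracket I would feed in the commuting hypothesis \eqref{eq: 1.1}. Evaluating the structure Jacobi operator \eqref{eq: 1.3} at $\xi=\xi_1\in\QP$ and simplifying with \eqref{eq: 1.16} yields $R_\xi\phi=(4m+8)\phi-S\phi+(h+\alpha)A\phi-A^2\phi$. Since $F$ is self-adjoint and $SA=AS$ by Lemma~\ref{lemma 1.2}, the brackets $[\,S\phi,S\,]$, $[\,A\phi,S\,]$, $[\,A^2\phi,S\,]$ become $SF$, $AF$, $A^2F$ and $[\,\phi,S\,]=F$, so the condition $(R_\xi\phi)S=S(R_\xi\phi)$ collapses to the single operator identity
\[
SF=(4m+8)F+(h+\alpha)AF-A^2F.
\]

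Finally I would combine the two. Again by Lemma~\ref{lemma 1.2} the commuting self-adjoint operators $A$ and $S$ share an eigenbasis $\{e_k\}$ with $Ae_k=\lambda_k e_k$, $Se_k=s_k e_k$; the first step gives $g(Fe_i,e_k)=(\lambda_i-\lambda_k)(h-\lambda_i-\lambda_k)\,g(\phi e_i,e_k)$, whence $g((AF-FA)e_i,e_k)=(\lambda_k-\lambda_i)\,g(Fe_i,e_k)$, so that $AF=FA$ is in fact equivalent to $F=0$. The identity coming from \eqref{eq: 1.1} scalarizes to the statement that $s_k=(4m+8)+(h+\alpha)\lambda_k-\lambda_k^2$ on every index $k$ that is $\phi$-coupled through $F$ to some $i$; confronting this with the independent value of $s_k$ read off from \eqref{eq: 1.16} is what should force the offending entries $g(Fe_i,e_k)$ to vanish, giving $AF=FA$. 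I expect this last confrontation to be the main obstacle: extracting $s_k$ from \eqref{eq: 1.16} means controlling the $\phi_1\phi$ term, i.e. precisely how $\phi$ mixes the principal spaces of $A$, and one must show the two expressions for $s_k$ are incompatible whenever $g(\phi e_i,e_k)\neq0$ with $\lambda_i\neq\lambda_k$, treating the $\xi_\nu$-directions, where $S$ and $\phi$ behave specially, as a separate case.
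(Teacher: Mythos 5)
Your set-up is largely sound: the identity $\p S-S\p=h(\p A-A\p)-(\p A^2-A^2\p)$ is exactly the paper's \eqref{eq: 1.29}, the operator form $R_\xi\p=(4m+8)\p-S\p+(h+\al)A\p-A^2\p$ does check out against \eqref{eq: 1.3} and \eqref{eq: 1.16} when $\x=\xo\in\QP$, and hence your reduction of \eqref{C-1} to $SF=(4m+8)F+(h+\al)AF-A^2F$ (using $SA=AS$ from Lemma~\ref{lemma 1.2}) is correct. But the proof is not finished. Your own observation that, in this setting, $AF=FA$ is \emph{equivalent} to $F=0$ means your route obliges you to prove $F=0$ outright --- which is the content of Lemma~\ref{lemma 1.5}, established in the paper only after $AF=FA$ is already in hand --- and the step that would do this, namely showing that $s_k=(4m+8)+(h+\al)\la_k-\la_k^2$ is incompatible with the value of $s_k$ read off from \eqref{eq: 1.16} whenever $g(Fe_i,e_k)\neq0$, is precisely the point you flag as ``the main obstacle'' and leave open. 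That is a genuine gap, not a routine verification: extracting $s_k$ from \eqref{eq: 1.16} requires controlling $g(\po\p e_k,e_k)$ and the $\QP$-components of the eigenvectors, and no contradiction falls out of your operator identity by algebra alone (its antisymmetrized version is automatically consistent with the eigenvalue relations).

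The paper avoids all of this with a short symmetrization argument that your normal form misses. By Lemma~\ref{lemma 1.3} both sides of \eqref{eq: 1.1} carry the same remainder $\text{Rem}(X)$, so \eqref{C-1} collapses to $2(\p S-S\p)+\al A(\p S-S\p)=0$, i.e.\ $2F+\al AF=0$ (again using $SA=AS$). Since $S$ and $A$ are symmetric and $\p$ is skew, $F$ is symmetric, so taking adjoints gives $2F+\al FA=0$; subtracting the two yields $\al(AF-FA)=0$. If $\al\neq0$ this is the claim, and if $\al=0$ the first identity already forces $F=0$. The fix for your argument is to express the commuting hypothesis as a relation of the form $2F+\al AF=0$, with no $S$ on the left, rather than as $SF=(4m+8)F+(h+\al)AF-A^2F$; only the former is closed under the adjoint trick.
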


\begin{proof}
By \eqref{eq: i} (resp. \eqref{eq: ii}) in Lemma~\ref{lemma 1.3}, we have
the left side of \eqref{eq: 1.1} (the right side of \eqref{eq: 1.1}) as follows:

\begin{equation}\label{eq: 1.22}
\left \{
\begin{aligned}
\RXP SX=2\p SX+\al A\p SX+Rem(X), \\
S\RXP X=2S\p X+\al SA\p X+Rem(X).
\end{aligned}
\right.
\end{equation}

\indent Combining equations in \eqref{eq: 1.22}, we have
\begin{equation}\label{eq: 1.24}
\begin{split}
\RXP SX-S\RXP X=2\p SX+\al A\p SX-2S\p X-\al SA\p X=0.
\end{split}
\end{equation}
\vskip 5pt
\noindent {\bf Case 1\,:} \quad $\al = 0$. Equation \eqref{eq: 1.24} becomes $S\p X=\p SX$.
By virtue of \cite[Theorem]{S02}, we conclude that if $M$ is a Hopf hypersurface in complex two-plane Grassmannians $\GBt$ with \eqref{eq: 1.1}, then $M$ satisfies the condition of Type~$(A)$.
\vskip 5pt
Thus, we may assume the following case.
\vskip 5pt
\noindent {\bf Case 2\,:} \quad $\al\neq0$.
\vskip 5pt
Using Lemma~\ref{lemma 1.2}, \eqref{eq: 1.24} becomes
\begin{equation}\label{eq: 1.25}
\begin{split}
2(\p S-S\p )+\al(A\p S- AS\p)=0.
\end{split}
\end{equation}
\indent Taking a symmetric part of \eqref{eq: 1.25}, we have
\begin{equation}\label{eq: 1.26}
2(\p S-S\p)-\al(S\p A-\p SA)=0.
\end{equation}
\indent Combining \eqref{eq: 1.25} and \eqref{eq: 1.26}, we know
\begin{equation}\label{eq: 1.27}
A(\p S-S\p)=(\p S-S\p)A.
\end{equation}
\end{proof}

\begin{lemma}\label{lemma 1.5}
Let $M$ be a Hopf real hypersurface in $\GBt$. If $M$ satisfies $A(\p S-S\p)=(\p S-S\p)A$ and $\x \in \QP$, then we have $S\p=\p S$.
\end{lemma}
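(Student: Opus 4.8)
Set $T=\phi S-S\phi$. The first thing to record is that $T$ is a \emph{symmetric} endomorphism: since $S^{*}=S$ and $\phi^{*}=-\phi$, we get $T^{*}=S^{*}\phi^{*}-\phi^{*}S^{*}=\phi S-S\phi=T$. The goal is to prove $T=0$. If $\alpha=0$ this is already contained in Case~1 of the proof of Lemma~\ref{lemma 1.4}, where $\phi S=S\phi$ drops out of \eqref{eq: 1.24}; so I assume $\alpha\neq0$ throughout. Recall also $A\xi=\alpha\xi$ and, by Lemma~\ref{lemma 1.2}, $AS=SA$, so $S$ preserves every principal curvature space of $A$.

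The strategy is to localize $T$ onto a single eigenspace of $A$ and then kill it there. Rewriting \eqref{eq: 1.25} and \eqref{eq: 1.26} in terms of $T$ (using $A\phi S-AS\phi=AT$ and $S\phi A-\phi SA=-TA$) gives the two one-sided identities
\begin{equation*}
(2I+\alpha A)\,T=0 \qquad\text{and}\qquad T\,(2I+\alpha A)=0 .
\end{equation*}
Let $V_{\mu}$ be the principal curvature space of $A$ for the eigenvalue $\mu$. The first identity forces $\operatorname{Im}T\subseteq\ker(2I+\alpha A)=V_{-2/\alpha}$, and the second forces $\ker T\supseteq\operatorname{Im}(2I+\alpha A)=V_{-2/\alpha}^{\perp}$, the orthogonal sum of the remaining $V_{\mu}$. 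Thus $T$ annihilates $V_{-2/\alpha}^{\perp}$ and carries $V_{-2/\alpha}$ into itself, and it only remains to show $T|_{V_{-2/\alpha}}=0$.

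On $V_{-2/\alpha}$ the operator $A$ acts as the scalar $-2/\alpha$, and since $\alpha^{2}\neq-2$ this space is orthogonal to $\xi\in V_{\alpha}$, so $\eta(X)=0$ for every $X\in V_{-2/\alpha}$. Substituting $AX=-\tfrac2\alpha X$, $A^{2}X=\tfrac4{\alpha^{2}}X$ and $A^{3}X=-\tfrac8{\alpha^{3}}X$ into \eqref{eq: 1.18} shows that $S$ acts on $V_{-2/\alpha}$ as a single scalar $s:=(4m+6)-\tfrac{2h}{\alpha}-\tfrac4{\alpha^{2}}$. Now fix $X\in V_{-2/\alpha}$ and write $\phi X=U+W$ with $U\in V_{-2/\alpha}$ and $W\in V_{-2/\alpha}^{\perp}$; because $S$ preserves both summands and equals $sI$ on $V_{-2/\alpha}$,
\begin{equation*}
TX=\phi SX-S\phi X=s\,\phi X-S\phi X=(sI-S)\phi X=(sI-S)W\in V_{-2/\alpha}^{\perp}.
\end{equation*}
Since $\operatorname{Im}T\subseteq V_{-2/\alpha}$, we get $TX\in V_{-2/\alpha}\cap V_{-2/\alpha}^{\perp}=\{0\}$, so $T$ vanishes on $V_{-2/\alpha}$ as well and hence $T=0$, that is, $\phi S=S\phi$. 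I expect the genuine work to be the localization step together with the verification that $S$ restricts to a scalar on the exceptional eigenspace $V_{-2/\alpha}$ via \eqref{eq: 1.18}; this scalarity is exactly what makes the final orthogonality argument close, and the whole reduction leans essentially on $\alpha\neq0$, with the degenerate case $\alpha=0$ disposed of separately.
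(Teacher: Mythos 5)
Your argument is sound step by step, but it is a genuinely different proof from the paper's, and it quietly uses more than the lemma's stated hypotheses. The paper's own proof stays strictly within the assumptions: since $A$ and $T=\phi S-S\phi$ are symmetric and commute, they are simultaneously diagonalizable in a common basis $\{E_i\}$ with $AE_i=\lambda_iE_i$ and $TE_i=\beta_iE_i$; the identity \eqref{eq: 1.29}, which follows from \eqref{eq: 1.16} alone (hence only from Hopf and $\xi\in\QP$), gives $(S\phi-\phi S)E_i=hA\phi E_i-A^2\phi E_i-h\lambda_i\phi E_i+\lambda_i^2\phi E_i$, and pairing with $E_i$ annihilates every term on the right by skew-symmetry of $\phi$ (e.g. $g(A\phi E_i,E_i)=\lambda_i g(\phi E_i,E_i)=0$), forcing $\beta_i=0$ for all $i$. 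Your route instead localizes $T$ onto the eigenspace $V_{-2/\alpha}$ via the one-sided identities $(2I+\alpha A)T=0$ and $T(2I+\alpha A)=0$, and then uses the scalarity of $S$ on that eigenspace (correctly extracted from \eqref{eq: 1.18} using $AS=SA$ from Lemma~\ref{lemma 1.2}) together with an orthogonality argument; each step checks out, including the degenerate case where $-2/\alpha$ is not an eigenvalue of $A$.

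The caveat is this: the identities \eqref{eq: 1.25} and \eqref{eq: 1.26}, as well as your disposal of the case $\alpha=0$ via Case~1 of Lemma~\ref{lemma 1.4}, are all consequences of the commuting condition \eqref{C-1} (they are derived from \eqref{eq: 1.24}), and \eqref{C-1} is \emph{not} among the hypotheses of Lemma~\ref{lemma 1.5}; the lemma assumes only that $M$ is Hopf, $\xi\in\QP$, and $A(\phi S-S\phi)=(\phi S-S\phi)A$. So what you have actually proved is the implication ``\eqref{C-1} together with $\xi\in\QP$ implies $S\phi=\phi S$,'' which is all the paper needs downstream, but it is not the lemma as literally stated. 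To prove the stated lemma you must argue from $AT=TA$ directly, without invoking \eqref{eq: 1.25}--\eqref{eq: 1.26}, as the paper does via simultaneous diagonalization and \eqref{eq: 1.29}; note that this also removes any need to treat $\alpha=0$ separately.
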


\begin{proof}
Since the shape operator $A$ and the tensor $\p S-S\p$ are both symmetric operators and commute with each other, they are diagonalizable. So there exists a common basis $\{E_1,E_2,...,E_{4m-1}\}$ such that the shape operator $A$ and the tensor $\p S-S\p$  both can be diagonalizable.
In other words, $AE_i=\lambda_i E_i$ and $(\p S-S\p )E_i=\beta_iE_i$, where $\lambda_i$ and $\beta_i$ are scalars for all $i\in{1,2,...,4m-1}$.

Here replacing $X$ by $\p X$ in \eqref{eq: 1.16} \big(resp. multiplying $\p$ to \eqref{eq: 1.16}\big), we have
%\begin{equation}\label{eq: 1.28}
%\begin{split}
%S\p X&=(4m+7)\p X-\po X+2\etw(X)\xth-2\eh(X)\xtw\\
%     &\quad+hA\p X-A^2\p X.
%\end{split}
%\end{equation}
%and
%\begin{equation}\label{eq: 1.28-1}
%\begin{split}
%\p SX&=(4m+7)\p X-\po X+2\etw(X)\xth-2\eh(X)\xtw\\
%     &\quad+h\p AX-\p A^2X.
%\end{split}
%\end{equation}

\begin{equation}\label{eq: 1.28}
\left \{
\begin{aligned}
S\p X=(4m+7)\p X-\po X+2\etw(X)\xth-2\eh(X)\xtw+hA\p X-A^2\p X, \\
\p SX=(4m+7)\p X-\po X+2\etw(X)\xth-2\eh(X)\xtw+h\p AX-\p A^2X.
\end{aligned}
\right.
\end{equation}

Combining equations in \eqref{eq: 1.28}, we get
\begin{equation}\label{eq: 1.29}
S\p X - \p SX = h A\p X - A^{2} \p X - h \p AX + \p A^{2}X.
\end{equation}
\indent Putting $X=E_i$ into \eqref{eq: 1.29} and using $AE_i=\lambda_i E_i$, we obtain
\begin{equation}\label{eq: 1.30}
(S\p-\p S)E_i  = h A\p E_i - A^{2} \p E_i - h \lambda_i\p E_i + \p \lambda_i^{2}E_i.
\end{equation}

Taking the inner product with $E_i$ into \eqref{eq: 1.30}, we have
\begin{equation*}
\begin{split}
\beta_i g(E_i,E_i)= h\lambda_i g(\p E_i,E_i ) - \lambda^{2}_{i}g(\p E_i,E_i)= 0.
\end{split}
\end{equation*}
\indent Since $g(E_i,E_i)\neq0$, $\beta_i=0$ for all $i\in{1,2,...,4m-1}$. This is equivalent to $(S\p-\p S)E_i=0$ for all $i\in{1,2,...,4m-1}$. It follows that $S\p X = \p SX$ for any tangent vector field $X$ on $M$.
\end{proof}

\vskip 5pt

Summing up Lemmas~\ref{lemma 1.2}, \ref{lemma 1.3}, \ref{lemma 1.4}, \ref{lemma 1.5} and \cite[Theorem]{S02}, we conclude that if $M$ is a Hopf hypersurface in complex two-plane Grassmannians $\GBt$ satisfying \eqref{C-1}, then $M$ satisfies the condition of Type~$(A)$.
\vspace {0.15in}

Hereafter, let us check whether the Ricci tensor of a model space of Type~$(A)$ satisfies the commuting condition \eqref{C-1}.

\vskip 5pt

%In the case of $\x\in\QP$, by using Lemmas~\ref{lemma 1.2}, \ref{lemma 1.3}, \ref{lemma 1.4} and \ref{lemma 1.5}, we can easily check that the commuting condition $S\p=\p S$ is equivalent condition to $(\RXP) S = S %(\RXP)$. Therefore by Lemma~\ref{lemma 1.5} and Theorem~$\rm B$, we can assert that:
%\begin{remark}\label{remark type A-1}
%\rm Real hypersurfaces of Type~$(A)$ in $\GBt$ satisfies the condition \eqref{C-1}.
%\end{remark}

From~\eqref{eq: 1.2} and \cite[Proposition~$\rm 3$]{BS1}, we obtain the following equations:
\begin{equation*}
SX = \left\{ \begin{array}{ll}
                (4m+h\al-\al^2)\x  & \mbox{if}\ \  X=\x \in T_{\al}\\
                (4m+6+h\be-\be^2)\x_{\nu}  & \mbox{if}\ \  X=\x_{\nu} \in T_{\beta} \\
                (4m+6+h\la-\la^2) X  & \mbox{if}\ \  X \in T_{\lambda}\\
                (4m+8) X      & \mbox{if}\ \  X \in T_{\mu}\\
\end{array}\right.
\end{equation*}

\begin{equation*}
\RX (X) = \left\{ \begin{array}{ll}
                0                                       & \mbox{if}\ \  X=\x \in T_{\al}\\
                (\al\be+2)\x_{\nu} & \mbox{if}\ \  X=\x_{\nu} \in T_{\beta} \\
                (\al\la+2)\p X & \mbox{if}\ \  X \in T_{\lambda}\\
                0    & \mbox{if}\ \  X \in T_{\mu}\\
\end{array}\right.
\end{equation*}

\begin{equation*}
(\RXP) X = \left\{ \begin{array}{ll}
                0                                       & \mbox{if}\ \  X=\x \in T_{\al}\\
                (\al\be+2)\p\x_{\nu} & \mbox{if}\ \  X=\x_{\ell} \in T_{\beta} \\
                (\al\la+2)\p X & \mbox{if}\ \  X \in T_{\lambda}\\
                0    & \mbox{if}\ \  X \in T_{\mu}.\\
\end{array}\right.
\end{equation*}

%Consider $X=\p\x_{\ell}$ into \eqref{C-1}.

Combining above three formulas, it follows that
\begin{equation*}
(\RXP) SX- S(\RXP) X = \left\{ \begin{array}{ll}
                0 & \mbox{if}\ \  X=\x \in T_{\al}\\
                0 & \mbox{if}\ \  X=\x_{\ell} \in T_{\beta} \\
                0 & \mbox{if}\ \  X \in T_{\lambda}\\
                0 & \mbox{if}\ \  X \in T_{\mu}.\\
\end{array}\right.
\end{equation*}

\begin{remark}\label{remark type A-3}
\rm When $\x \in \QP$, a Hopf hypersurface $M$ in $\GBt$ with \eqref{C-1} is locally congruent to of Type~$(A)$ by virtue of \cite[Theorem]{S02}.
\vskip 5pt.
\end{remark}

When $\x \in \Q$, a Hopf hypersurface $M$ in $\GBt$ with \eqref{C-1} is locally congruent to of Type~$(B)$ by virtue of \cite[Main Theorem]{LS}.

Now let us consider our problem for a model space of Type~$(B)$ which will be denoted by $M_{B}$. In order to do this, let us calculate $(\RXP) S = S \RXP$ related to the $M_{B}$. On $T_{x}M_{B}$, $x \in M_{B}$,
the equations \eqref{eq: 1.2} and \eqref{eq: 1.3} are reduced to the following equations, respectively:
\begin{align}
SX & = (4m+7)X - 3\eta(X)\x + hAX - A^2 X - \sum_{\nu =1}^3 \{ 3 \eta_{\nu}(X)\x_{\nu} +\eta(\p_{\nu}X)\p_{\nu}\x \}
    \ \  \text{and}\label{eq: 1.31} \\
& R_{\x}(X)= X - {\eta}(X){\x} +{\alpha}AX-\al^2{\eta}(X){\x}- {\SN}\Big\{{\EN}(X){\XN}+3{\EN}(\p X){\PNK}\Big\}.\label{eq: 1.32}
\end{align}
\indent From \eqref{eq: 1.31} and \eqref{eq: 1.31} and \cite[Proposition~$\rm 2$]{BS1}, we obtain the following
\begin{equation}\label{eq: 1.33}
SX = \left\{ \begin{array}{ll}
                (4m+4+h\al-\al^2)\x  & \mbox{if}\ \  X=\x \in T_{\al}\\
                (4m+4+h\be-\be^2)\x_{\ell}  & \mbox{if}\ \  X=\x_{\ell} \in T_{\beta} \\
                (4m+8)\p\x_{\ell}  & \mbox{if}\ \ X=\p \x_{\ell} \in T_{\gamma}\\
                (4m+7+h\la-\la^2) X  & \mbox{if}\ \  X \in T_{\lambda}\\
                (4m+7+h\mu-\mu^2) X      & \mbox{if}\ \  X \in T_{\mu}\\
\end{array}\right.
\end{equation}

\begin{equation}\label{eq: 1.34}
\RX (X) = \left\{ \begin{array}{ll}
                0                                       & \mbox{if}\ \  X=\x \in T_{\al}\\
                \al\be\x_{\ell} & \mbox{if}\ \  X=\x_{\ell} \in T_{\beta} \\
               4\p\x_{\ell} & \mbox{if}\ \ X=\p \x_{\ell} \in T_{\gamma}\\
                (1+\al\la)\p X & \mbox{if}\ \  X \in T_{\lambda}\\
                (1+\al\mu)\p X     & \mbox{if}\ \  X \in T_{\mu}\\
\end{array}\right.
\end{equation}

\begin{equation}\label{eq: 1.35}
(\RXP) X = \left\{ \begin{array}{ll}
                0                                       & \mbox{if}\ \  X=\x \in T_{\al}\\
                4\p\x_{\ell} & \mbox{if}\ \  X=\x_{\ell} \in T_{\beta} \\
                -\al\be\x_{\ell} & \mbox{if}\ \ X=\p \x_{\ell} \in T_{\gamma}\\
                (1+\al\mu)\p X & \mbox{if}\ \  X \in T_{\lambda}\\
                (1+\al\la)\p X     & \mbox{if}\ \  X \in T_{\mu}.\\
\end{array}\right.
\end{equation}

%Consider $X=\p\x_{\ell}$ into \eqref{C-1}.

From \eqref{eq: 1.33}, \eqref{eq: 1.34} and \eqref{eq: 1.35}, it follows that
\begin{equation}\label{eq: 1.36}
(\RXP) SX- S\RXP X = \left\{ \begin{array}{ll}
                0 & \mbox{if}\ \  X=\x \in T_{\al}\\
                4(h\be-\be^2-4)\p\x_{\ell} & \mbox{if}\ \  X=\x_{\ell} \in T_{\beta} \\
                \al\be(h\be-\be^2-4)\x_{\ell} & \mbox{if}\ \ X=\p \x_{\ell} \in T_{\gamma}\\
                (1+\al\mu)(\lambda-\mu)(h-\lambda-\mu)\p X  & \mbox{if}\ \  X \in T_{\lambda}\\
                (1+\al\lambda)(\mu-\lambda)(h-\lambda-\mu)\p X      & \mbox{if}\ \  X \in T_{\mu}.\\
\end{array}\right.
\end{equation}
\indent By calculation, we have $\lambda+\mu=\be$ on $M_{B}$.
From \eqref{eq: 1.36}, we see that $M_{B}$ satisfies \eqref{C-1}, only when $h=\be$ and $h\be-\be^2-4=0$. This gives us to a contradiction.
\vskip 5pt
Hence, we give a complete proof of Theorem~$1$.

%%%%%%%%%%%%%%%%%%%%%%%%%%%%%%%%% 3333333333333 %%%%%%%%%%%%%%%%%%%%%%%%%%%%%%%%%%%%
\section{Proof of Theorem~$2$}\label{Section 2}
\setcounter{equation}{0}
\renewcommand{\theequation}{2.\arabic{equation}}
\vspace{0.13in}
For a commuting problem in quaternionic space forms Berndt \cite{Ber01} has
introduced the notion of normal Jacobi operator $\bar{R}(X,N)N \in T_x M$, $x \in M$ for real hypersurfaces $M$ in quaternionic
projective space $\Bbb{Q}P^m$ or in quaternionic hyperbolic space $\Bbb{Q}H^m$, where $\bar{R}$ denotes the curvature tensor of  $\Bbb{Q}P^m$ or of $\Bbb{Q}H^m$. He \cite{Ber01} has also shown that the
curvature adaptedness, when the normal Jacobi operator commutes
the shape operator $A$, is equivalent to the fact that the
distributions $\Q$ and $\QP=\text{Span}\{\xi_1,\xi_2 ,\xi_3 \}$ are invariant by the shape operator $A$ of $M$,
where $T_x M=\Q\oplus \QP$, $x \in M$. In this section, by using the notion of normal Jacobi operator $\bar{R}(X,N)N\in T_x M$, $x \in M$ for real hypersurfaces $M$ in $\GBt$ and geometric quantities in \cite{PJS} and \cite{S02}, we give a complete proof of Theorem~$2$.

From now on, let $M$ be a Hopf hypersurface in $\GBt$ with
\begin{equation}\label{eq: 2.1}
(\RNP) SX = S (\RNP)X
\end{equation}
for any tangent vector field $X$ on $M$.
The normal Jacobi operator $\Bar{R}_{N}$ of $M$ is defined by $\RNX=\Bar{R}(X,N)N$ for any tangent vector $X \in T_{x}M$, $x \in M$.
In \cite[Introduction]{PJS}, we obtain the following equation
\begin{equation}\label{eq: 2.2}
\begin{split}
\RNX&=X+3\e(X)\x+3\SN\EN(X)\XN\\
    &\quad-\SN\{\ENx\PN\PX-\ENx\e(X)\XN-\EN(\PX)\PN\x \}.
\end{split}
\end{equation}

\begin{lemma}\label{lemma 2.1}
Let $M$ be a Hopf hypersurface in $\GBt$, $m \geq 3$, with \eqref{C-2}. If the principal curvature $\alpha=g(A\x, \x)$ is constant along the direction of $\x$, then $\x$ belongs to either the distribution $\Q$ or the distribution~$\QP$.
\end{lemma}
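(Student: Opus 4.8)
The plan is to argue by contradiction, as in the proof of Lemma~\ref{lemma 1.1}. Suppose $\x$ lies in neither $\Q$ nor $\QP$, so that $\x=\e(\X)\X+\e(\xo)\xo$ as in \eqref{**}, with unit vectors $\X\in\Q$, $\xo\in\QP$ and $\e(\X)\e(\xo)\neq0$. The Ricci identities \eqref{eq: 1.6}--\eqref{eq: 1.8} are consequences of \eqref{eq: 1.2} alone, so they remain valid in the present setting, and from $\x\al=0$ together with the Hopf condition one still obtains $A\xo=\al\xo$ and $A\p\X=\ka\p\X$ with $\ka=\frac{\al^2+4\e^2(\X)}{\al}$. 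If $\al=0$, the gradient identity $Y\al=(\x\al)\e(Y)-4\SN\ENK\EN(\p Y)$ of \cite[Lemma~$1$]{BS1} forces $\SN\ENK\EN(\p Y)=0$ for every $Y$, which already yields $\x\in\Q$ or $\x\in\QP$; so the substantive case is $\al\neq0$.

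For $\al\neq0$ I would first substitute $X=\x$ into \eqref{eq: 2.1}. At this point a new phenomenon appears that has no counterpart in Section~\ref{Section 1}: since the normal Jacobi operator \eqref{eq: 2.2} carries \emph{no} shape-operator term, a direct computation gives $\RN(\p\X)=0$ and $\RN(\p\xo)=0$, equivalently $(\RNP)\X=0=(\RNP)\xo$. As $\p S\x=4\e(\X)\p\X$ is a multiple of $\p\X$, it follows that $(\RNP)S\x=\RN(\p S\x)=0$ holds automatically, while $(\RNP)\x=\RN(\p\x)=0$ as well. Hence the Reeb substitution that drove Lemma~\ref{lemma 1.1} collapses here to the trivial identity $0=0$, and the commuting condition \eqref{eq: 2.1} can transmit information only through the transverse directions $\xt,\xh\in\QP$.

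Accordingly, the heart of the argument is to test \eqref{eq: 2.1} on $\xt$ and $\xh$. A short computation from \eqref{eq: 2.2} gives $(\RNP)\xt=-2\e(\xo)\xh$ and $(\RNP)\xh=2\e(\xo)\xt$, so on $\mathrm{Span}\{\xt,\xh\}$ the operator $\RNP$ acts as the non-degenerate skew map $\xt\mapsto-2\e(\xo)\xh$, $\xh\mapsto2\e(\xo)\xt$. Feeding $X=\xt$ into \eqref{eq: 2.1} then gives $(\RNP)S\xt=S(\RNP)\xt=-2\e(\xo)S\xh$, and symmetrically $(\RNP)S\xh=2\e(\xo)S\xt$. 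Expanding $S\xt$ and $S\xh$ by \eqref{eq: 1.2}, applying $\RNP$, and comparing the $\xt$- and $\xh$-components on each side should produce a relation whose only solution compatible with $\al\neq0$ is $\e(\xo)=0$, contradicting $\e(\X)\e(\xo)\neq0$ and thereby proving the lemma.

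The main obstacle is exactly this expansion of $S\xt$ and $S\xh$: through the term $hAX-A^2X$ in \eqref{eq: 1.2} they involve the principal curvatures $A\xt$ and $A\xh$, which---unlike $A\xo$ and $A\p\X$---are not supplied by $\x\al=0$. I expect the real work to lie in pinning these down from the Hopf and Codazzi equations (equivalently, in controlling the $\phi_{2}\X,\phi_{3}\X$ cross-terms produced by the quaternionic identities), after which the $\xt,\xh$ block of \eqref{eq: 2.1} should force $\e(\xo)=0$. This is precisely the step with no analogue in Lemma~\ref{lemma 1.1}, where the shape operator entered $\RX$ directly and the contradiction emerged at once from the Reeb substitution.
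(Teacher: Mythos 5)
Your setup is sound and your observation that the Reeb substitution degenerates here is correct (one can indeed check $\RN(\p\X)=0$, so $X=\x$ yields $0=0$), but the core of your argument is not carried out, and you have flagged the gap yourself: testing \eqref{eq: 2.1} on $\xt$ and $\xh$ forces you to expand $S\xt$ and $S\xh$, which requires knowing $A\xt$ and $A\xh$, and these are \emph{not} determined by the Hopf condition and $\x\al=0$ in the mixed situation \eqref{**}. ``I expect the real work to lie in pinning these down'' is precisely the missing proof; as written, no contradiction has been derived when $\al\neq0$.

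The paper sidesteps this obstacle by choosing a different test vector, namely $X=\p\X$, for which everything is already known from Section~\ref{Section 1}: by \eqref{eq: 1.6} one has $S\p\X=\si\p\X$ (this uses only $A\p\X=\ka\p\X$, which you correctly note is available), so the left side of \eqref{eq: 2.1} becomes $\si\,\RN(\p^2\X)=\si\{-\RN\X+\e(\X)\RN\x\}$, computable from \eqref{eq: 2.3} and \eqref{eq: 2.4} and equal to $4\si\e(\X)\eo(\x)\xo$, which has no $\X$-component; the right side is $-S\RN\X+\e(\X)S\RN\x$, expanded via \eqref{eq: 1.7} and \eqref{eq: 1.8}. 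Taking the inner product with $\X$ then gives $0=-16\e(\X)\eo^2(\x)$, a contradiction, with no need for $A\xt$ or $A\xh$ at any point. If you want to salvage your route through $\xt,\xh$, you would first have to prove eigenvalue equations for $A\xt,A\xh$ under \eqref{**}, which is substantially harder than the lemma itself; the lesson is that the useful substitution is the one landing in the span of $\X,\x,\xo,\p\X$, where $S$ and $\RN$ are already diagonalized by \eqref{eq: 1.6}--\eqref{eq: 1.8} and \eqref{eq: 2.3}--\eqref{eq: 2.4}.
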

\begin{proof}
%Put $\x$ satisfying \eqref{**} with the same assumptions as the proof of lemma~\ref{lemma 1.1}.
In order to prove this lemma, we assume \eqref{**} again,
%\begin{equation}\label{**}
%\x = \eta(X_{0})X_{0}+\eta(\x_{1})\x_{1}
%\end{equation}
for some unit vectors $X_{0} \in \Q$, $\x_{1} \in \QP$ and $\eta(X_{0})
\eta(\x_{1})\neq 0$.

On the other hand, from \eqref{eq: 2.2} and \eqref{**}, we have
\begin{align}
& \RN \X=4\e^2(\X)\X+4\eo(\x)\e(\X)\xo\ \  \text{and}\label{eq: 2.3} \\
& \RN \x=4\x+4\eo(\x)\xo.\label{eq: 2.4}
\end{align}
% and \eqref{eq: 1.8}
Using \eqref{eq: 1.7}, \eqref{eq: 1.8}, \eqref{eq: 2.3}, \eqref{eq: 2.4} and inserting $X=\p\X$ into \eqref{eq: 2.1}, we have the following equations:
\begin{equation}\label{eq: 2.5}
\begin{split}
\text{the left side of \eqref{eq: 2.1}}&=(\RNP)S\p\X=\si\RNP^2\X\\
    &=-\si\RN \X+\si\e(\X)\RN \x\\
    &=-\si\{4\e^2(\X)\X+4\eo(\x)\e(\X)\xo \}\\
    &\quad+\si\{4\e(\X)\x+4\e(\X)\eo(\x)\xo \}\\
    &=4\si\e(\X)\eo(\x)\xo
\end{split}
\end{equation}
\begin{equation}\label{eq: 2.6}
\begin{split}
\text{the right side of \eqref{eq: 2.1}}&=S\RN (\p^2\X)=-S\RN \X+\e(\X)S\RN \x\\
    &=-4\e^2(\X)S\X-4\e(\x)\e(\X)S\xo\\
    &\quad+4\e(\X)S\x+4\e(\X)\e(\xo)S\xo\\
    &=-4\e^2(\X)\big\{(4m+7+\al h-\al^2)\X-3\e(\X)\x\\&\quad+\eo^2(\x)\X-\e(\X)\eo(\x)\xo\big\}\\
    &\quad+4\e(\X)\big\{(4m+4+\al h-\al^2)\x-4\eo(\x)\x\big\},
\end{split}
\end{equation}
where $\sigma:=4m+8+h\ka+\ka^2$.
Recalling that $\e(\X)\neq 0$ and combining \eqref{eq: 2.5} and \eqref{eq: 2.6}, we have
\begin{equation*}
\begin{split}
4\si\e(\X)\eo(\x)\xo
&=-4\e^2(\X)\big\{(4m+7+\al h-\al^2)\X-3\e(\X)\x\\&\quad+\eo^2(\x)\X-\e(\X)\eo(\x)\xo\big\}\\
&\quad+4\e(\X)\big\{(4m+4+\al h-\al^2)\x-4\eo(\x)\x\big\}.
\end{split}
\end{equation*}
\indent Taking the inner product of above equation with $\X$, we get
\begin{equation*}
\begin{split}
0&=-4\e(\X)\big\{(4m+7+\al h-\al^2)-3\e^2(\X)+\eo^2(\x)\big\}\\
 &\quad+4\{(4m+4+\al h-\al^2)\e(\X)\}\\
 &=-4\e(\X)\{3-3\e^2(\x)+\eo^2(\x)\}\\
 &=-16\e(\X)\eo^2(\x).
\end{split}
\end{equation*}
\indent This gives a contradiction. Thus, we give a complete proof of this lemma.
%So we shall divide our consideration in two cases that $\x$
%belongs to either the distribution $\QP$ or $\Q$,
%respectively.
\end{proof}

Now this case implies that $\x$ belongs to the distribution $\QP$.
\vskip 5pt

\begin{lemma}\label{lemma 2.2}
Let $M$ be a Hopf hypersurface in
$\GBt$ with \eqref{eq: 2.1}. If $\x \in \QP$, we have $S\p=\p S$.
\end{lemma}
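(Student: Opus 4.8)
Lemma 2.2 asserts that when $M$ is a Hopf hypersurface in $\GBt$ with $(\RNP)S = S(\RNP)$ and $\x \in \QP$, then $S\p = \p S$.

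The plan is to mirror the argument of Lemmas~\ref{lemma 1.2}--\ref{lemma 1.5}, while exploiting one crucial structural simplification: the normal Jacobi operator contains \emph{no} shape operator. Since $\x \in \QP$, I put $\x = \xo$ and first reduce \eqref{eq: 2.2}. Because $\eo(\x)=1$ and $\etw(\x)=\eh(\x)=0$, the summations collapse and \eqref{eq: 2.2} takes the purely algebraic form
\begin{equation*}
\RNX = X + 4\e(X)\x - \po\p X + 3\SN\EN(X)\XN + \SN\EN(\p X)\PN\x,
\end{equation*}
which involves only the almost contact $3$-structure. Replacing $X$ by $\p X$ and using $\p^2 X = -X+\e(X)\x$ together with $\po\x=0$ (so that $-\po\p\p X = \po X$) then yields a closed formula for the restricted operator $\RNP$ in terms of $\p X$, $\po X$ and the vectors $\xtw,\xth$.

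The next step reuses two facts from Section~\ref{Section 1} that require only $\x\in\QP$ and not any commuting hypothesis: Lemma~\ref{lemma 1.2} gives $SA=AS$, and Lemma~\ref{lemma 1.3} supplies the interchange formulas~\eqref{eq: i} and \eqref{eq: ii} for $\p S$ and $S\p$ sharing a common remainder $\text{Rem}(X)$. I would substitute the reduced $\RNP$ into \eqref{eq: 2.1}, computing $(\RNP)SX$ by applying $\RN$ to $\p SX$ and $S(\RNP)X$ by applying $S$ to $\RNP X$, and in each case invoke Lemma~\ref{lemma 1.3} to trade $\p S$ for $S\p$ and conversely. After subtracting the two sides, the common $\text{Rem}$-contributions and the structure-vector terms built from $\xtw,\xth$ are designed to cancel, leaving an identity of the form $c\,(\p S - S\p)X = 0$, possibly together with a term annihilated by $SA=AS$. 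Since the surviving constant $c$ is nonzero for $m\geq 3$, this forces $\p S = S\p$; should the combination instead yield only $A(\p S-S\p)=(\p S-S\p)A$, I would finish exactly as in Lemma~\ref{lemma 1.5} by simultaneously diagonalizing $A$ and $\p S-S\p$.

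The main obstacle is the bookkeeping of the mixed terms. The operator $\RNP$ couples $\Q$ and $\QP$ through the pieces $\po\p X$ and $\SN\EN(\p X)\PN\x$, so one must repeatedly apply the commutation identity $\p\PN X-\PN\p X=\EN(X)\x-\e(X)\XN$ and the relations for $\PN\x$ to show that the cross terms produced on the two sides of \eqref{eq: 2.1} genuinely agree. Verifying that the leftover coefficient of $\p S-S\p$ does not vanish—so that no degenerate branch survives and the conclusion $\p S=S\p$ is forced—is the delicate point of the computation.
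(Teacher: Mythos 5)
Your proposal follows essentially the same route as the paper: with $\x=\xo$ the normal Jacobi operator reduces to a purely algebraic expression with no shape operator, and feeding the interchange formulas \eqref{eq: i} and \eqref{eq: ii} of Lemma~\ref{lemma 1.3} into \eqref{eq: 2.1} makes the $\text{Rem}$-terms cancel, leaving $2(\p S-S\p)X=0$. The nonvanishing constant you worry about is exactly $2$, so the conclusion follows directly and neither $SA=AS$ nor the diagonalization argument of Lemma~\ref{lemma 1.5} is needed.
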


\begin{proof}
Putting $\x=\xo \in \QP$ for our convenience sake, \eqref{eq: 2.2} becomes
\begin{equation*}
\begin{split}
\RNX=X+7\E(X)\x+2\etw(X)\xtw+2\eh(X)\xth-\po\p X.
\end{split}
\end{equation*}

Because of \eqref{eq: i} and \eqref{eq: ii} in lemma~\ref{lemma 1.3}, we have the following equations:

\begin{equation}\label{eq: 2.7}
\left \{
\begin{aligned}
\RN \p SX=2\p SX-\text{Rem}(X),\\
S\RN \p X=2S\p X-\text{Rem}(X),
\end{aligned}
\right.
\end{equation}
where $\text{Rem}(X)=4(m+2)\{2\etw(X)\xth-2\eh(X)\xtw+\p X-\po X\}$.\\
\indent Combining equations in \eqref{eq: 2.7}, we conclude that \eqref{eq: 2.1} is equivalent to $S\p X=\p SX$ .
\end{proof}

\vskip 5pt

In the case of $\x\in\QP$, by using \eqref{eq: i} and \eqref{eq: ii} in Lemma~\ref{lemma 1.3}, and Lemma~\ref{lemma 2.2},
we can be easily seen that the commuting condition $S\p=\p S$ is equivalent to $(\RNP) S = S (\RNP)$.
\vskip 5pt
\noindent Therefore, by Lemma~\ref{lemma 2.2} and \cite[Theorem]{S02}, we can assert that:
\begin{remark}\label{remark type A-4}
\rm Real hypersurfaces of Type~$(A)$ in $\GBt$ satisfies the condition \eqref{C-2}.
\end{remark}

\vskip 5pt
When $\x \in \Q$, a Hopf hypersurface $M$ in $\GBt$ with \eqref{C-2} is locally congruent to of Type~$(B)$ by virtue of \cite[Main Theorem]{LS}.

\vskip 5pt
Let us consider our problem for a model space of Type~$(B)$ which will be denoted by $M_{B}$. In order to do this, let us calculate $(\RNP) S = S (\RNP)$ of $M_{B}$.
From \cite[Proposition~$\rm 2$]{BS1}, we obtain
\begin{equation}\label{eq: 2.8}
\RNX = \left\{ \begin{array}{ll}
                4\x                                      & \mbox{if}\ \  X=\x \in T_{\al}\\
                4\x_{\ell} & \mbox{if}\ \  X=\x_{\ell} \in T_{\beta} \\
                0 & \mbox{if}\ \ X=\p \x_{\ell} \in T_{\gamma}\\
                X & \mbox{if}\ \  X \in T_{\lambda}\\
                X & \mbox{if}\ \  X \in T_{\mu},\\
\end{array}\right.
\end{equation}

\begin{equation}\label{eq: 2.9}
(\RNP) X = \left\{ \begin{array}{ll}
                0                                       & \mbox{if}\ \  X=\x \in T_{\al}\\
                0 & \mbox{if}\ \  X=\x_{\ell} \in T_{\beta} \\
                -4\x_{\ell} & \mbox{if}\ \ X=\p \x_{\ell} \in T_{\gamma}\\
                \p X & \mbox{if}\ \  X \in T_{\lambda}\\
                \p X     & \mbox{if}\ \  X \in T_{\mu}.\\
\end{array}\right.
\end{equation}

From \eqref{eq: 2.8} and \eqref{eq: 2.9}, it follows that
\begin{equation*}
(\RNP) SX- S(\RNP) X = \left\{ \begin{array}{ll}
                0 & \mbox{if}\ \  X=\x \in T_{\al}\\
                0 & \mbox{if}\ \  X=\x_{\ell} \in T_{\beta} \\
                4(h\be-\be^2-4)\x_{\ell} & \mbox{if}\ \ X=\p \x_{\ell} \in T_{\gamma}\\
                (\lambda-\mu)(h-\lambda-\mu)\p X  & \mbox{if}\ \  X \in T_{\lambda}\\
                (\mu-\lambda)(h-\lambda-\mu)\p X  & \mbox{if}\ \  X \in T_{\mu}.\\
\end{array}\right.
\end{equation*}
\indent We see that $M_{B}$ satisfies \eqref{C-2}, only when $h=\be$ and $h\be-\be^2-4=0$. This gives us to a contradiction.

\vskip 5pt

Thus, we can give a complete proof of Theorem~$2$ in the introduction.

\vskip 5pt
%%%%%%%%%% Reference %%%%%%%%%%%%%%%%%%%%%%%%%%%%%%%%%%%%%%%%%%%%%%%%%%%%%%%%%%%%%%%%%%%%%%%%%%%%%%%%%%%%%%%%%%%%%%%%%%%%%%%%%%

\end{document}